\renewcommand{\H}{\mathcal{H}} 	
\newcommand{\CC}{{\mathbb{C}}}
\newcommand{\As}{{\mathscr{A}}}\newcommand{\Bs}{{\mathscr{B}}}\newcommand{\Cs}{{\mathscr{C}}}
\newcommand{\Es}{{\mathscr{E}}}\newcommand{\Fs}{{\mathscr{F}}}\newcommand{\Gs}{{\mathscr{G}}}
\newcommand{\Ms}{{\mathscr{M}}}\newcommand{\Ns}{{\mathscr{N}}}
\newcommand{\Rs}{{\mathscr{R}}}\newcommand{\Ss}{{\mathscr{S}}}
\DeclareFontFamily{U}{rsfs}{\skewchar\font127 }
\DeclareFontShape{U}{rsfs}{m}{n}{%
   <5> <6> rsfs5
   <7> rsfs7
   <8> <9> <10> <10.95> <12> <14.4> <17.28> <20.74> <24.88> rsfs10
}{}
\DeclareSymbolFont{rsfs}{U}{rsfs}{m}{n} 
\DeclareSymbolFontAlphabet{\scr}{rsfs}
\newcommand{\Af}{\scr{A}}\newcommand{\Ef}{\scr{E}}
\newcommand{\Mf}{\scr{M}} 
\newcommand{\Sf}{\scr{S}}\newcommand{\Tf}{\scr{T}}
\DeclareMathOperator{\CCl}{\CC l}
\renewcommand{\emph}{\textbf} 										
\newcommand{\ip}[2]{\langle #1\mid #2\rangle}					
\newcommand{\hlink}[2]{\href{#1}{\texttt{#2}}} 
\newcommand{\xqedhere}[2]{%
  \rlap{\hbox to#1{\hfil\llap{\ensuremath{#2}}}}}
\theoremstyle{plain}
\newtheorem{theorem}{Theorem}[section]							
\newtheorem{proposition}[theorem]{Proposition}
\theoremstyle{definition}
\numberwithin{equation}{section}  		
\title{\textbf{Remarks on Morphisms of Spectral Geometries\footnote{
This is a revised version, only for arXiv, of a paper published in East-West Journal of Mathematics (2013) 15(1):15-24. 
 \newline 
A previous draft of this work appeared in the collection of contributed full papers of the ICMA-MU 2013 ``International Conference in Mathematics and Applications - Mahidol University'' (organized in Bangkok on  19-21 January 2013) where, for personal reasons, the first author refused to have his name mentioned. 
}}}
\author{
\normalsize Paolo Bertozzini\footnote{
Supported by Thammasat University Research Scholar Grant n.2/15/2556: ``Categorical Non-commutative Geometry''.
} 
\\
\normalsize \textit{Department of Mathematics and Statistics, Faculty of Science and Technology,}
\\
\normalsize \textit{Thammasat University, Bangkok 12121, Thailand}
\\
\normalsize e-mail: \texttt{paolo.th@gmail.com}
\\
\\
\normalsize Fred Jaffrennou\footnote{Partially supported by an RA scholarship, academic year 2012, from the Faculty of Graduate Studies, Mahidol University.} 
\\
\normalsize \textit{Department of Mathematics, Faculty of Science} 
\\
\normalsize \textit{Mahidol University, Bangkok 10400 Thailand} 
\\
\normalsize email: \texttt{fredjaffr@yahoo.fr}
}
\date{\normalsize{published version: 26 May 2013 
\quad 
revised version: 08 April 2014 
}}
\begin{document}

\maketitle

\begin{abstract}
Non-commutative geometry, conceived by Alain Connes, is a new branch of mathematics whose aim is the study of geometrical spaces using tools from operator algebras and functional analysis. Specifically metrics for non-commutative manifolds are now encoded via spectral triples, a set of data involving a Hilbert space, an algebra of operators acting on it and an unbounded self-adjoint operator, maybe endowed with supplemental structures.

Our main objective is to prove a version of Gel'fand-Na\u\i mark duality adapted to the context of Alain Connes' spectral triples.

In this preliminary exposition, we present:
\begin{itemize}
\item
a description of the relevant categories of geometrical spaces, namely compact Hausdorff smooth finite-dimensional orientable Riemannian manifolds, or more generally Hermitian bundles of Clifford modules over them;
\item
some tentative definitions of categories of algebraic structures, namely commutative Riemannian spectral triples;
\item
a construction of functors that associate a naive morphism of spectral triples to every smooth (totally geodesic) map.
\end{itemize}
The full construction of spectrum functors (reconstruction theorem for morphisms) and a proof of duality between the previous ``geometrical'' and ``algebraic'' categories are postponed to subsequent works, but we provide here some hints in this direction.

We also conjecture how the previous ``algebraic'' categories might provide a suitable environment for the description of morphisms in non-commutative geometry.

\smallskip

\noindent
\emph{Keywords:} Non-commutative Geometry, Spectral Triple, Gel'fand-Na\u\i mark Duality, Categories of Bundles and Modules. 

\smallskip

\noindent
\emph{MSC-2010:}
					46L87,			
					46M15, 			
					46L08,			
					46M20, 			
					16D90.			
\end{abstract}




\section{Introduction}

In A.Connes' non-commutative geometry~\cite{C,FGV,Lan}, every compact Hausdorff spinorial Riemannian finite-dimensional orientable manifold
$(M,g_M)$ with a spinorial Hermitian bundle $S(M)$ and volume form $\mu_{g_M}$ is associated to a commutative regular spectral triple $(\As,\H,D_M)$ where: $\As:=C(M;\CC)$ is the unital commutative C*-algebra of complex valued continuous functions on $M$ with respect to the maximum modulus norm; $\H:=L^2(S(M))$ is the Hilbert space obtained by completion of the \hbox{$C(M)$-module} $\Gamma(S(M))$ of continuous sections of the spinor bundle with respect to the norm induced by the inner product \hbox{$\ip{\sigma}{\rho}:=\int \ip{\sigma_x}{\rho_x}_{S(M)_x}\, \text{d}\mu_{g_M}$}, for all $\sigma,\rho\in \Gamma(S(M))$; and $D_M$ is the Dirac operator i.e.~the closure of the densely defined essentially self-adjoint operator obtained by contracting the spinorial Levi-Civita connection with the Clifford multiplication.

A reconstruction theorem proved by A.Connes~\cite{C11,C12} (see also~\cite{Re1,RV} for previous only partially successful attempts) assures that a commutative spectral triple (that is irreducible real, graded, strongly regular $m$-dimensional finite absolutely continuous orientable with totally antisymmetric Hochschild cycle in the last $m$ entries, and satisfying Poincar\'e duality) is naturally isomorphic to the above mentioned canonical spectral triple of a spinorial Riemannian manifold with a given Hermitian spinor bundle equipped with charge conjugation.
The reconstruction theorem has been recently extended to cover the case of Riemannian spectral triples~\cite{LRV} and to more general situations of almost commutative (real) spectral triples~\cite{Ca,Ca2}.

It is still an open problem to reformulate these reconstruction theorems for (almost) commutative spectral triples in a fully categorical context in the same spirit of such celebrated cornerstones of non-commutative topology as Gel'fand-Na\u\i mark duality (between categories of continuous maps of compact Hausdorff topological spaces and categories of commutative unital $*$-homomorphisms of unital C*-algebras), Serre-Swan equivalence (between vector bundles and finite projective modules), or Takahashi duality (between Hilbert bundles over compact Hausdorff spaces and Hilbert 
C*-modules over commutative unital C*-algebras).

As a first step towards such duality results, several suggestions for the construction of categories of spectral triples have been put forward (see for example~\cite{CM6,B1,B3,B4,B5,B6} and the references therein). Of particular relevance is the category of spectral triples recently constructed by
B.Mesland~\cite{Me,Me2}, where morphisms are Kasparov KK-bimodules equipped with smooth structure and connection.

In this very preliminary and tentative account our purpose, in the spirit of Cartesian geometry, is to suggest a description of some possible dualities between categories of geometrical spaces (usually compact Hausdorff smooth finite-dimensional orientable Riemannian manifolds or more generally Hermitian bundles of Clifford modules over them), here collectively denoted by $\Tf$, and categories of algebraic functional analytic structures (usually some variants of Connes' spectral triples) here denoted by $\Af$.
The dualities are realized via two contravariant functors, the section functor \hbox{$\Gamma:\Tf\to\Af$} and the spectrum functor $\Sigma:\Af\to\Tf$ as in the following diagram:
\begin{equation*}
\xymatrix{
\Tf \rtwocell^\Gamma_\Sigma{'}  & \Af.
}
\end{equation*}
In the commutative C*-algebras context, we will describe how to embed categories of smooth (totally geodesic) maps of compact Riemannian manifolds into more general categories of Hermitian bundles and we will also see how a section functor can be used to trade such categories of bundles with categories of Hilbert C*-bimodules.

In the non-commutative C*-algebra case, we will mainly deal with topological situations, discussing only the rather special categories of 
``factorizable'' 
Hilbert \hbox{C*-bi}\-modules over tensor products of unital C*-algebras over commutative subalgebras. A more complete study aiming at the construction of functors from Riemann manifolds to B.Mesland's category of spectral triples and to the possible definition of involutive categories of spectral triples is left for future work.

\section{Categories of Manifolds, Bundles and Propagators}

The objects of our categories will be, for now, compact Hausdorff smooth Riemannian orientable finite-dimensional manifolds that are not necessarily connected.\footnote{For background on manifolds, bundles and differential geometry, the reader is referred for example to  R.Abraham, J.Marsden, T.Ratiu~\cite{AMR} and L.Nicolaescu~\cite{N}.} We have several interesting categories that can be naturally constructed:
\begin{itemize}
\item[a)]
The category $\Mf^\infty$ of smooth maps between such manifolds and its subcategories $\Mf^\infty_e$ of smooth embeddings\footnote{Here and in all the subsequent items we could also consider categories of (injective) immersions in place of embeddings.} and $\Mf^\infty_s$ of smooth submersions.
\item[b)]
The category R-$\Mf^\infty_e$ of smooth maps that are Riemannian embeddings and R-$\Mf^\infty_s$ of smooth Riemannian submersions.
\item[c)]
The category R-$\Mf^\infty_{ge}$ of totally geodesic smooth Riemannian embeddings and R-$\Mf^\infty_{gs}$ of totally geodesic smooth Riemannian submersions.
\item[d)]
The category R-$\Mf^\infty_{gec}$ of totally geodesic smooth Riemann embeddings of connected components and R-$\Mf^\infty_{gsc}$ of totally geodesic smooth Riemannian coverings.
\end{itemize}
There are natural inclusion functors between such categories as in the following diagrams:
\begin{gather*}
\xymatrix{
\text{R-}\Mf^\infty_{gec} \ar@{^(->}[r] & \text{R-}\Mf^\infty_{ge} \ar@{^(->}[r] & \text{R-}\Mf^\infty_{e}  \ar@{^(->}[r] & \Mf^\infty_{e}
\ar@{^(->}[r] &  \Mf^\infty}
\\
\xymatrix{\text{R-}\Mf^\infty_{gsc} \ar@{^(->}[r] & \text{R-}\Mf^\infty_{gs}  \ar@{^(->}[r] & \text{R-}\Mf^\infty_{s}  \ar@{^(->}[r] & \Mf^\infty_{s}
\ar@{^(->}[r] &  \Mf^\infty
}
\end{gather*}
The previous categories are not equipped with involutions, since the reciprocal relations are generally not functions, furthermore the categories of embeddings and submersions appear in a kind of dual role. A more satisfactory involutive environment can be obtained considering  (in the terminology often used in algebraic geometry) cycles i.e.~relations $R$ between such manifolds that are themselves compact (respectively (totally geodesic) Riemannian) orientable sub-manifolds of the product manifold $M\xleftarrow{\pi_M}M\times N\xrightarrow{\pi_N}N$ and equipping them with ``bundle-propagators'' between the tangent bundles $T(M)$ and $T(N)$ i.e.~smooth Hermitian sub-bundles of $\pi_M^\bullet(T(M))\oplus\pi_N^\bullet(T(N))|_R$ that are fiberwise linear (partial isometric, or equivalently partial co-isometric) relations between the corresponding fibers of the pull-backs on $R$ of the tangent bundles of $M$ and $N$.\footnote{
Since the equalizer of smooth maps between smooth manifolds usually is not a smooth manifold, strictly speaking, the composition of smooth ((totally geodesic) Riemannian) cycles between Hausdorff compact Riemannian orientable manifolds fails to be another such manifold. In order to solve this problem it is appropriate to embed the previous categories of manifolds into the corresponding categories of Hausdorff compact Riemannian orientable finite-dimensional \emph{diffeological spaces}~\cite{I-Z,La} and from now on, whenever necessary, we will assume that such embedding has been done.} 
Furthermore, in order assure the closeness under composition of this category of bundle-propagators, we will actually work with smooth ((totally geodesic) Riemannian) \emph{relational spans} $M\xleftarrow{\rho_M}R \xrightarrow{\rho_N}N$ of such compact Hausdorff Riemannian manifolds (or diffeological spaces).

More generally, we can further ``decouple'' the Hermitian bundles from the underlying Riemannian structure of the manifolds allowing ``(amplified) propagators'' between arbitrary Hermitian bundles of Clifford modules over the given manifolds that are equipped with a compatible connection.
In more detail, given two smooth (diffeological) Hermitian bundles $(E^1,\pi^1,X^1)$ and $(E^2,\pi^2,X^2)$ over compact Hausdorff smooth orientable finite-dimensional Riemannian manifolds (diffeological spaces) $X^1$ and $X^2$, here is a description of the morphisms in some of the several relevant categories $\Ef$ of bundles:
\begin{itemize}
\item[$\Ef^1$]
The usual categories of bundle morphisms: $(f,F)$ where $f:X^1\to X^2$ is a morphism of manifolds (diffeological spaces) in any of the previous categories $\Mf$ and $F: E^1\to E^2$ is a smooth map such that $\pi^2\circ F=f\circ\pi^1$ and that is respectively fiberwise linear, isometric (when $f$ is in $\Mf_e$), co-isometric (if $f$ is in $\Mf_s$).
\item[$\Ef^2$]
The category of Takahashi bundle morphisms~\cite{Ta2}: $(f,F)$ where the map $f:X^1\to X^2$ is as above and $F: f^\bullet(E^2)\to E^1$ is a morphism of bundles over $X^1$ in the previous sense.
\item[$\Ef^3$]
The category of \emph{propagators of bundles}: $(E,\gamma,R)$ where $R$ is a smooth ((totally geodesic) Riemannian) relational span 
$X^1\xleftarrow{\rho_1}R \xrightarrow{\rho_2} X^2$ and $E$ is the total space of an Hermitian sub-bundle, over $R$, 
of the Whitney sum $\rho_1^\bullet(E^1)\oplus \rho_2^\bullet(E^2)$, that is a fiberwise partial isometry i.e.~the fiber $E_{r}:=\gamma^{-1}(r)\subset \rho_1^\bullet(E^1)_r\oplus \rho_2^\bullet(E^2)_r$ is the graph of a partial isometry between $\rho^\bullet_1(E^1)_r$ and $\rho^\bullet_2(E^2)_r$, for all $r\in R$.\footnote{This category, as well as the category $\Ef^4$, is involutive and its morphisms can be considered as a bivariant version of Takahashi bundle morphisms.}
\item[$\Ef^4$]
The category of amplified propagators of bundles: $(E,\gamma,R)$, where $R$ is a relational span as above and $E$ is an Hermitian sub-bundle of the Whitney sum $\rho_1^\bullet(E^1\otimes W^1)\oplus \rho_2^\bullet(E^2\otimes W^2)$, for two given Hermitian bundles $W^1$ over $X^1$ and 
$W^2$ over $X^2$ in such a way that, for every $r\in R$, $E_r$ is the graph of a partial isometry.\footnote{ 
Here, and in the category $\Ef^3$, the Hermitian structure on $E_r$, $r\in R$ is uniquely determined by the isometry requirement for the projections and it is a rescaling of the metric induced by the orthogonal Whitney sum. 
\\ 
More generally one can simply consider spans of fiberwise isometries 
$\rho_1^\bullet(E^1)\xleftarrow{\pi_1}E \xrightarrow{\pi_2} \rho_2^\bullet(E^2)$ of Hermitian bundles over $R$. 
}
\end{itemize}
We have natural inclusions relating the previous categories as follows:
\begin{equation*}
\xymatrix{
\Ef^1 \ar@{^(->}[r] & \Ef^3 & & \Ef^2 \ar@{^(->}[r] & \Ef^3 & & \Ef^3 \ar@{^(->}[r] & \Ef^4.
}
\end{equation*}
Whenever we have bundles of Clifford modules that are equipped with Clifford connections, we can require our morphisms to be stable under the action of the tensor product of the Clifford bundles and totally geodesic for the connection.

Exploiting the language of 2-categories, we can produce an even more efficient way to encode such categorical structures:\footnote{For details on higher categories, the reader is referred for example to T.Leinster~\cite{L}.} objects are compact Hausdorff smooth orientable finite-dimensional manifolds (diffeological spaces) $X^1,X^2$; 1-arrows are Hermitian bundles $E^1,E^2$ (eventually equipped with a Clifford action and a compatible connection) over relational spans between $X^1$ and $X^2$; 2-arrows are (amplified) propagators between such 1-arrows bundles, that can be required to be stable under the Clifford action and totally geodesic for the connection. Note that, since 2-arrows are themselves bundles over relational spans, the construction of arrows can be iterated obtaining arbitrary higher categories $\Tf$ of bivariant bundles over relational spans.

We can now sketch the construction of embedding functors from the several categories $\Mf$ of manifolds to $\Ef$ of bundles (and so into the higher categories $\Tf$ of bivariant bundles).
\begin{theorem}
We have covariant Grassmann functors $\Lambda^\CC: \Mf\to\Ef$ from the previous categories of manifolds into the category $\Ef$
of (amplified) propagators of bundles.
\end{theorem}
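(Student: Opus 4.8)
The plan is to define $\Lambda^\CC$ on objects by sending a compact Hausdorff orientable Riemannian manifold $(M,g_M)$ to its complexified exterior (Grassmann) bundle $\Lambda^\CC T(M):=\bigoplus_{k}\Lambda^{k}T(M)\otimes\CC$, regarded as a Hermitian bundle of Clifford modules in the standard way: the Hermitian metric is the one induced fibrewise by $g_M$ on each $\Lambda^{k}$, the action of $v\in T_x(M)$ is the Clifford multiplication $c(v)\colon\omega\mapsto v\wedge\omega-\iota_{g_M(v)}\omega$ (so that $\Lambda^\CC T(M)$ is identified with the complex Clifford bundle $\CCl(T(M))$ acting on itself), and the compatible connection is the extension of the Levi-Civita connection of $g_M$ to the exterior algebra, which is a Clifford connection. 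Every verification here is fibrewise and classical — compatibility of $c$ with the metric and of $\nabla^{\mathrm{LC}}$ with both $c$ and the metric — so the first step is just to confirm that the output lies among the objects admitted by $\Ef$.

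On arrows, given a map $f\colon M\to N$ in any of the categories $\Mf$, I would take as underlying relational span the graph $M\xleftarrow{\rho_1}R_f\xrightarrow{\rho_2}N$, where $R_f:=\{(x,f(x))\mid x\in M\}\subset M\times N$ and $\rho_1,\rho_2$ are the two coordinate projections restricted to $R_f$; this is a compact Hausdorff orientable submanifold diffeomorphic to $M$, and, with the metric induced from $g_M\oplus g_N$, it is totally geodesic in $M\times N$ precisely when $f$ is a totally geodesic map, which is exactly why the categories (b)--(d) of Riemannian, totally geodesic maps are the natural domains. Over $R_f$ the differential gives a smooth bundle map $df\colon\rho_1^\bullet T(M)\to\rho_2^\bullet T(N)$, and I would then apply the Grassmann functor fibrewise to obtain $\Lambda^\CC df\colon\rho_1^\bullet\Lambda^\CC T(M)\to\rho_2^\bullet\Lambda^\CC T(N)$. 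When $f\in\Mf_e$ is a Riemannian embedding, $df$ is a fibrewise isometric inclusion — equivalently, using the orthogonal splitting $\rho_2^\bullet T(N)\cong T(M)\oplus\nu$ into tangent and normal bundle and the resulting $\Lambda^\CC\rho_2^\bullet T(N)\cong\Lambda^\CC T(M)\otimes\Lambda^\CC\nu$, the inclusion $\omega\mapsto\omega\otimes 1$ — and since exterior powers of a linear isometry are again isometries (the Gram determinant of a family of vectors is preserved), $\Lambda^\CC df$ is a fibrewise isometry; dually, for $f\in\Mf_s$ a Riemannian submersion it is a fibrewise co-isometry. In either case the graph
\[
E_f:=\{(\xi,\Lambda^\CC df\,\xi)\mid \xi\in(\ke\Lambda^\CC df)^{\perp}\}\subset \rho_1^\bullet\Lambda^\CC T(M)\oplus\rho_2^\bullet\Lambda^\CC T(N),
\]
with the projection $\gamma\colon E_f\to R_f$, is an object $(E_f,\gamma,R_f)$ of $\Ef^3$, hence of $\Ef^4$; its stability under the Clifford actions and total geodesy for the two connections follow from naturality of the symbol map and from the fact that a totally geodesic map carries the Levi-Civita connection of $M$ to that of $N$ (its second fundamental form, i.e.\ $\nabla\,df$, vanishes).

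Functoriality is then immediate on the Riemannian totally geodesic subcategories: $\Lambda^\CC(\id_M)$ is the identity propagator over the diagonal of $M\times M$; for a composite, $R_{g\circ f}$ is canonically the fibre product $R_f\times_N R_g$ (one leg of each span being an embedding, so no transversality issue arises) and $d(g\circ f)=dg\circ df$, while $\Lambda^\CC$ is multiplicative on linear maps; moreover the relational composite of the graphs of two fibrewise isometries $\Lambda^\CC df$ and $\Lambda^\CC dg$ is again the graph of a fibrewise isometry, namely $\Lambda^\CC(dg\circ df)$ restricted to the appropriate subbundle, so $\Lambda^\CC(g\circ f)=\Lambda^\CC(g)\circ\Lambda^\CC(f)$. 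The same works verbatim for submersions and, by keeping track of the Clifford action and connection, for the Clifford/connection refinements of the construction.

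The step I expect to be the main obstacle is the passage from the Riemannian (co)isometric categories to the full smooth categories $\Mf^\infty$, $\Mf^\infty_e$, $\Mf^\infty_s$. There $df$ is no longer fibrewise (co)isometric, so one must either polar-decompose $df=U_f\,|df|$ and use the partial isometry $U_f$ — but $U_f$ does not behave well under composition, since the polar part of a composite is not the composite of the polar parts, so strict functoriality fails on the nose — or realise $df$ (after the harmless rescaling available on a compact manifold) as a fibrewise partial isometry into suitably \emph{amplified} bundles, landing in $\Ef^4$; and, since for a general smooth map the rank of $df$ is not locally constant, $E_f$ need not even be a genuine sub-bundle of the Whitney sum. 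This is why the construction is phrased for the several categories $\Mf$ rather than proved uniformly, and why the text enlarges everything to diffeological spaces (so that equalizers, and hence composites of cycles, stay inside the category). Making the amplified assignment simultaneously well-defined and functorial on all of $\Mf^\infty$ is the delicate residual point that the full proof must settle.
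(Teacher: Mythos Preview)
Your proposal is correct and follows essentially the same line as the paper: objects go to their complexified Grassmann bundles with the induced Hermitian metric, Clifford action and Levi-Civita connection, and a map $f$ is sent to the graph of the second-quantized differential $\Lambda^\CC(Df)$, which is a fibrewise (co)isometry precisely when $f$ is Riemannian, with the totally geodesic hypothesis guaranteeing stability under the connection. In fact you supply more detail than the paper's own proof does---the explicit relational span via $R_f$, the verification of functoriality under composition, and the honest discussion of why the general smooth case (non-constant rank, failure of polar parts to compose) forces one into amplified propagators or diffeological enlargements---all of which the paper leaves implicit or defers.
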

\begin{proof}
On the objects, the functor $\Lambda^\CC$ associates to every smooth orientable Riemannian manifold $M$ its complexified Grassmann algebra Hermitian bundle $\Lambda^\CC(M)$ with its natural right and left Clifford actions of the complexified Clifford algebra bundle $\CCl(M)$ and with the induced Levi-Civita Ehresman connection.

On the arrows, the functor $\Lambda^\CC$ associates to every smooth map $f:X^1\to X^2$ the complexified Bogoljubov second quantized
$\Lambda^\CC(Df):\Lambda^\CC(X^1)\to\Lambda^\CC(X^2)$ of the differential map \hbox{$Df:T(X^1)\to T(X^2)$} of $f$. If the map $f$ is a (totally geodesic) Riemannian isometry or co-isometry, fiberwise the graph of $\Lambda^\CC(Df)$ is an isometry or co-isometry and hence determines a propagator bundle. The complexified Clifford  functor $\CCl$ associates to every object $M$ its complexified Clifford bundle $\CCl(M)$ and to every Riemannian (co)isometry $f$ an amplified propagator of the Clifford bundles that induces a right/left Clifford action on the propagator bundle determined by $\Lambda^\CC(Df)$ between the Grassmann bundles. For totally geodesic maps, the covariant derivative on the Whitney sum of the Grassmann bundles decomposes inducing a covariant derivative on the propagator bundle.
\end{proof}

If we examine in more detail how totally geodesic maps between compact Riemannian manifolds are described in terms of propagators, we see that the isometric differential map \hbox{$Df:T(M)\to T(N)$} induces an orthogonal splitting $T(N)|_{f(M)}=Df(T(M))\oplus Df(T(M))^\perp$ of the restriction to $f(M)$ of the tangent bundle of $N$. 
Passing to the complexified Grassmann bundles, and similarly for the Clifford bundles, we obtain the following tensorial decompositions
\begin{gather*}
\Lambda^\CC(T(N)|_{f(M)})\simeq\Lambda^\CC(Df(T(M)))\otimes\Lambda^\CC(Df(T(M))^\perp),
\\
\CCl(T(N)|_{f(M)})\simeq\CCl(Df(T(M)))\otimes\CCl (Df(T(M))^\perp).
\end{gather*}
For totally geodesic maps, the restriction of the Levi-Civita connection on $T(N)|_{f(M)}$ decomposes as a direct sum of the connections on the subbundles $Df(T(M))$ and $Df(T(M))^\perp$ and, denoting by $\nabla^N$, $\nabla^M$ and $\nabla^\perp$ the connection induced respectively on $\Lambda^\CC(T(N)|_{f(M)})$, $\Lambda^\CC(Df(T(M)))$ and $\Lambda^\CC(Df(T(M))^\perp)$, we have $\nabla^N=(\nabla^M\otimes I) \oplus (I\otimes \nabla^\perp)$ and contracting with the Clifford actions we obtain the following relation $D_N|_{f(M)}=(D_M\otimes I)\oplus (I\otimes D_\perp)$ between the Hodge-De Rham Dirac operators for $M$ and $N$, where $D_\perp$ denotes a ``transversal'' operator obtained contracting the Clifford action with the orthogonal part of the connection
$\nabla^\perp$. The interesting part, in view of the future study of links with the notion of B.Mesland morphisms of spectral triples, is the fact that the  Grassmann bundle $\Lambda^\CC(N)$ decomposes as a tensor product of a ``copy'' of the Grassmann bundle of $M$ with a ``transversal'' factor that, passing to the module of sections, will provide a Mesland morphism between the Hodge-De Rham spectral triples of $M$ and $N$.

\section{Naive Categories of Spectral Geometries}

In this section we try to examine some very tentative candidates for categories $\Af$ of non-commutative spectral geometries that might be used as targets for  functors that are defined on the categories $\Ef$ of bundles described in the previous section. Our general ideology will be to start at the topological level from Takahashi duality~\cite{Ta2} (that generalizes the well-known Gel'fand-Na\u\i mark duality between compact Hausdorff spaces and unital commutative C*-algebras) and proceed from there progressively adding the additional structures (Clifford actions, connections) that are required for the description of more rigid geometrical settings. Since Takahashi duality is between Hilbert bundles over compact Hausdorff spaces and Hilbert C*-modules over commutative unital C*-algebras, it is natural for us to start working on Hilbert C*-(bi)modules rather than on Hilbert spaces.
This explains our need to partially reformulate a naive notion of A.Connes spectral triples in the case of Hilbert C*-modules.

For our purpose here, a (naive) \emph{spectral triple} $(\As,\H,D)$ is given by a (possibly non-commutative) unital C*-algebra $\As$ faithfully represented on the Hilbert space $\H$ and a (possibly unbounded) self-adjoint operator $D$ with compact resolvent and such that the commutator $[D,x]$ extends to a bounded operator on $\H$, for all $x$ in a dense unital \hbox{C*-subalgebra} of $\As$ leaving invariant the domain of $D$.
We will reserve the terms \emph{Atiyah-Singer spectral triples} and \emph{Hodge-De Rham spectral triples} for all those spectral triples, with commutative C*-algebras $\As$, for which respectively either A.Connes' or S.Lord-A.Rennie-J.Varilly's reconstruction theorems~\cite{C11,LRV} are viable.

We say that $(\As,\Ms,D)$ is a naive left \emph{spectral module triple} if $\Ms$ is a unital left Hilbert C*-module, over the unital C*-algebra $\As$, that is equipped with a (possibly unbounded) regular operator $D$ such that, for all $x$ in a dense unital C*-subalgebra of $\As$ leaving invariant the domain of $D$, the commutator $[D,x]$ extends to an adjointable operator on $\Ms$.

The first category of spectral geometries that we consider is strictly adapted to the commutative algebra situation and will be in duality with the categories of (amplified) propagators already described.\footnote{
For the case of finitely generated projective Hilbert C*-modules. 
}
\begin{proposition}
There is an involutive  category $\Af^1$ of propagators of unital Hilbert \hbox{C*-modules} over commutative unital C*-algebras whose morphisms from the module $\Ms_\As$ to the module $\Ns_\Bs$ are given by Hilbert C*-modules 
$\Es_\Rs$ that are graphs $\Es_\Rs\subset(\Rs\otimes_\As\Ms)\oplus(\Rs\otimes_\Bs\Ns)$ of isometric morphisms of Hilbert C*-modules on $\Rs$, where
$\Rs$ is a unital C*-algebra bimodule over $\As\otimes_\CC\Bs$.\footnote{
More generally we can consider spans of isometries of Hilbert C*-modules 
$\Rs\otimes_\As\Ms\xleftarrow{\Lambda_1}\Es_\Rs \xrightarrow{\Lambda_2} \Rs\otimes_\Bs\Ns$ over $\Rs$.
}

\end{proposition}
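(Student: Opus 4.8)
The construction is, under Takahashi and Serre--Swan duality, the algebraic counterpart of the composition of relational spans of manifolds and of the propagators of bundles in $\Ef^3$, so the plan is to import the fibre-product composition of spans into the Hilbert C*-module world and to check the categorical axioms formally, the only real work being a base-change lemma for interior tensor products. I would first rewrite the data in the form dictated by the footnote to the statement: a morphism $\Ms_\As\to\Ns_\Bs$ is a triple $(\Es_\Rs,\Lambda_1,\Lambda_2)$ in which $\Rs$ is a unital C*-algebra with a fixed unital $*$-homomorphism $\As\otimes_\CC\Bs\to\Rs$, whose image I assume central as in the commutative prototype, making $\Rs$ an $\As$-$\Bs$-bimodule, $\Es_\Rs$ is a unital Hilbert $\Rs$-module, and $\Lambda_1\colon\Es_\Rs\to\Rs\otimes_\As\Ms$ and $\Lambda_2\colon\Es_\Rs\to\Rs\otimes_\Bs\Ns$ are isometric morphisms of Hilbert $\Rs$-modules, where $\Rs\otimes_\As\Ms$ is the interior (balanced) tensor product. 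The ``graph'' phrasing of the statement is then the identification of $\Es_\Rs$ with $\{(\Lambda_1 e,\Lambda_2 e):e\in\Es_\Rs\}$ inside $(\Rs\otimes_\As\Ms)\oplus(\Rs\otimes_\Bs\Ns)$, with the metric rescaled so that both projections are isometric; I would carry the span version throughout, as it is the one visibly stable under the involution.

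The technical engine I would isolate as a lemma: for a unital $*$-homomorphism $\Rs\to\Rs'$ the functor $\Rs'\otimes_\Rs(-)$ takes adjointable isometric morphisms of Hilbert $\Rs$-modules to adjointable isometric morphisms (since $(\id_{\Rs'}\otimes\lambda)^*(\id_{\Rs'}\otimes\lambda)=\id_{\Rs'}\otimes(\lambda^*\lambda)$) and takes orthogonally complemented submodules to orthogonally complemented submodules; in particular it sends the graph of an isometry $\lambda$ to the graph of $\id_{\Rs'}\otimes\lambda$, and it preserves finite projectivity. Complementation is what makes interior tensor product well behaved here: a graph of an isometry is complemented, so its defining split short exact sequence survives base change. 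Now, given $(\Es_\Rs,\Lambda_1,\Lambda_2)\colon\Ms_\As\to\Ns_\Bs$ and $(\Fs_\Ss,M_1,M_2)\colon\Ns_\Bs\to\Ps_\Cs$, put $\Rs':=\Rs\otimes_\Bs\Ss$, the amalgamated C*-tensor product over the commutative algebra $\Bs$ (dual to the fibre product of relational spans), which is again unital and an $\As\otimes_\CC\Cs$-bimodule; base-change $\Es_\Rs$ along $\Rs\to\Rs'$ and $\Fs_\Ss$ along $\Ss\to\Rs'$, and set $(\Es_\Rs)\circ(\Fs_\Ss)$ to be the fibre product of the two resulting isometric spans over their common leg $\Rs'\otimes_\Bs\Ns$, kept with the two remaining legs. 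In the graph picture this fibre product is just the graph of the composite isometry $\Rs'\otimes_\As\Ms\to\Rs'\otimes_\Bs\Ns\to\Rs'\otimes_\Cs\Ps$, hence again a morphism of $\Af^1$; the identity of $\Ms_\As$ is $\Rs=\As$ (a bimodule over $\As\otimes_\CC\As$ via multiplication) with the diagonal $\Ms\hookrightarrow\Ms\oplus\Ms$, and composing with it reduces $\Rs\otimes_\As\As$ or $\As\otimes_\As\Ss$ canonically to $\Rs$ or $\Ss$ and the composite isometry to the original one.

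Associativity then follows from associativity of composition of isometries together with the canonical isomorphism $(\Rs\otimes_\Bs\Ss)\otimes_\Cs\Ts\simeq\Rs\otimes_\Bs(\Ss\otimes_\Cs\Ts)$ and naturality of base change; strictly this gives associativity and unitality only up to coherent isomorphism, so one either passes to isomorphism classes of propagators, obtaining a genuine category, or records that $\Af^1$ is most naturally a bicategory. For the involution, send $(\Es_\Rs,\Lambda_1,\Lambda_2)\colon\Ms_\As\to\Ns_\Bs$ to $(\Es_\Rs,\Lambda_2,\Lambda_1)\colon\Ns_\Bs\to\Ms_\As$ over the same $\Rs$, now viewed as a $\Bs$-$\As$-bimodule via the flip $\Bs\otimes_\CC\As\to\As\otimes_\CC\Bs$: this is the identity on objects, an involution on the nose, and contravariant because the fibre-product construction is symmetric in its two inputs (equivalently because $(\mu\lambda)^*=\lambda^*\mu^*$ for the composite isometries); compatibility with base change is immediate, tensoring commuting with interchanging the two summands.

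I expect the main obstacle to lie not in any single deep step but in pinning down $\Rs\otimes_\Bs\Ss$ as an honest unital C*-algebra: one must fix the C*-norm on the $\Bs$-balanced quotient of a C*-tensor product of $\Rs$ and $\Ss$ (for which centrality of the images of $\As\otimes_\CC\Bs$, or at least a canonical cross-norm, is the clean hypothesis) and check that this choice is functorial, so that base change, composition and the coherence isomorphisms are all defined before quotienting by isomorphism. A secondary point requiring care is that the rescaled Hermitian metric on graphs be tracked through base change and fibre products so the projections stay isometric; both are routine in the commutative case $\Rs=C(R)$ but merit explicit statements in general.
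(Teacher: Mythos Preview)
Your proposal is correct and follows the same idea as the paper, which in fact gives no formal proof but only the remark that ``the details of the proposition can be obtained considering that the section functor $\Gamma$ from Hilbert bundles to Hilbert C*-modules preserves direct sums and transforms the pull-back of bundles into change of the base algebra of modules via tensor product.'' Your explicit construction of composition via the amalgamated tensor product $\Rs\otimes_\Bs\Ss$ and the fibre product of base-changed isometric spans, driven by the base-change lemma, is exactly the working-out of those details under the dictionary pull-back $\leftrightarrow$ change of ring, Whitney sum $\leftrightarrow$ direct sum; your further observation that associativity and unitality hold only up to coherent isomorphism (so that $\Af^1$ is most honestly a bicategory, or a category of isomorphism classes) is a refinement the paper does not make explicit.
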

The details of the proposition can be obtained considering that the section functor $\Gamma$ from Hilbert bundles to Hilbert C*-modules preserves direct sums and transforms the pull-back of bundles into change of the base algebra of modules via tensor product. In such commutative setting, if necessary, further requirements can be added to assure that these propagators of bimodules correspond to (totally geodesic) Riemannian maps.

Note that, as always, propagators consist of two distinct processes: first a \emph{transport} via pull-back of bundles and Hilbert C*-modules onto a common space here realized via the change of rings with tensorization over $\As$ and $\Bs$ and then a \emph{correspondence} here realized via the selection of suitable submodules in the direct sum.

For the special case of spectral module triples $(\As,\Ms_1,D_1)$ and $(\As,\Ms_2,D_2)$ on the same algebra $\As$, we can further specialize the propagators morphism of Hilbert C*-modules obtaining the following interesting definition of a category of spectral correspondences.

\begin{proposition}
There is a \emph{naive totally geodesic category of spectral correspondences module triples} $\Sf$ whose objects are naive spectral module triples over the same unital C*-algebra and whose morphisms, say from $(\As,\Ms_1,D_1)$ to $(\As,\Ms_2,D_2)$, consist of spectral module triples $(\As,\Phi,D_\Phi)$ where
$\Phi\subset \Ms_1\oplus \Ms_2$ is a left $\As$-submodule that is stable under the action of the regular operator $D_1\oplus D_2$ and $D_\Phi:=(D_1\oplus D_2)|_\Phi$.
\end{proposition}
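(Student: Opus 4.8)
The plan is to verify the category axioms directly on the proposed hom-sets. A first observation is that each prospective morphism $(\As,\Phi,D_\Phi)$ is itself an object of $\Sf$: since $\Ms_1\oplus\Ms_2$ is a unital left Hilbert $\As$-module, a closed $\As$-submodule $\Phi$ is again one; $D_1\oplus D_2$ is regular and its restriction to the stable submodule $\Phi$ is closed, its graph being the intersection of the graph of $D_1\oplus D_2$ with $\Phi\oplus\Phi$; and from $[D_1\oplus D_2,x]|_\Phi=([D_1,x]\oplus[D_2,x])|_\Phi$, using that $x$ leaves each $\dom D_i$ invariant and that $\Phi$ is an $\As$-submodule stable under $D_1\oplus D_2$, one gets that $[D_\Phi,x]$ is the restriction to $\Phi$ of an adjointable operator. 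So the real content of the statement is that these data organize into a category; I would supply the identities and the composition and then check the unit and associativity laws.

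For composition, given $(\As,\Phi,D_\Phi):(\As,\Ms_1,D_1)\to(\As,\Ms_2,D_2)$ and $(\As,\Psi,D_\Psi):(\As,\Ms_2,D_2)\to(\As,\Ms_3,D_3)$, I would take $\Psi\circ\Phi\subseteq\Ms_1\oplus\Ms_3$ to be (the closure of) the relational composite $\{(m_1,m_3):\exists\,m_2\in\dom D_2,\ (m_1,m_2)\in\Phi,\ (m_2,m_3)\in\Psi\}$, with $D_{\Psi\circ\Phi}:=(D_1\oplus D_3)|_{\Psi\circ\Phi}$. This is visibly a left $\As$-submodule, and it is stable under $D_1\oplus D_3$ for the following reason: if $(m_1,m_3)$ is in the relational composite and in the domain, with intermediate element $m_2\in\dom D_2$, then $(m_1,m_2)\in\Phi\cap\dom(D_1\oplus D_2)$ gives $(D_1m_1,D_2m_2)\in\Phi$, and likewise $(D_2m_2,D_3m_3)\in\Psi$, so $D_2m_2$ is the witness showing $(D_1m_1,D_3m_3)\in\Psi\circ\Phi$ — this is exactly where stability of $\Phi$ under $D_1\oplus D_2$ and of $\Psi$ under $D_2\oplus D_3$ gets used. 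The commutator condition is inherited exactly as for a single morphism, so $(\As,\Psi\circ\Phi,D_{\Psi\circ\Phi})$ is again a morphism of the prescribed form.

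For identities I would take $\id_{(\As,\Ms,D)}:=(\As,\Delta_\Ms,(D\oplus D)|_{\Delta_\Ms})$, where $\Delta_\Ms:=\{(m,m):m\in\Ms\}\subseteq\Ms\oplus\Ms$ carries the induced (rescaled) inner product; the isomorphism of Hilbert $\As$-modules $m\mapsto(m,m)$ intertwines $D$ with $(D\oplus D)|_{\Delta_\Ms}$, so this is a spectral module triple of the required submodule form. Relational composition with a diagonal is the identity on underlying submodules, $\Delta_{\Ms_2}\circ\Phi=\Phi=\Phi\circ\Delta_{\Ms_1}$ (the closure being harmless since $\Phi$ is already closed), and the restricted operators match, which gives the unit laws. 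Associativity then reduces to associativity of relational composition, together with the facts that the passage to closure is monotone and idempotent and commutes with relational composition up to closure, so that $(\Xi\circ\Psi)\circ\Phi$ and $\Xi\circ(\Psi\circ\Phi)$ have the same closure in $\Ms_1\oplus\Ms_4$.

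The hard part will be analytic, not categorical. Relational composition of two closed submodules need not be closed, so a closure is genuinely needed; but then one must still check that $\Psi\circ\Phi$ has dense intersection with $\dom(D_1\oplus D_3)$ so that $D_{\Psi\circ\Phi}$ is densely defined, and — more delicately — that the closure of a stable submodule is again stable under the closed operator $D_1\oplus D_3$, which is not automatic for elements of the closure that are not graph-norm limits from $\Phi\cap\dom D$. A clean way around this is to define $\Psi\circ\Phi$ as the closure of the \emph{domain-level} relational composite and $D_{\Psi\circ\Phi}$ as the closure of the restriction of $D_1\oplus D_3$ to that domain. One must also address regularity of $D_{\Psi\circ\Phi}$ (complementedness of its graph) and adjointability of the restricted commutators $([D_1,x]\oplus[D_3,x])|_{\Psi\circ\Phi}$, both of which hold once $\Psi\circ\Phi$ is orthogonally complemented in $\Ms_1\oplus\Ms_3$. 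All of these difficulties disappear if one restricts — as in the companion category $\Af^1$ — to finitely generated projective, hence complemented, submodules (in particular graphs of partial isometries); in the fully ``naive'' reading I would build density and complementedness explicitly into the admissibility of a morphism rather than leave them tacit.
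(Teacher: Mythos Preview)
The paper does not actually supply a proof of this proposition: it is stated and then followed only by the remark that $\Sf$ ``is essentially a bivariant version of the naive category of spectral triples'' with references to~\cite{B1,B3,B6}. So there is nothing in the paper to compare against beyond that pointer; your write-up is already far more detailed than anything the paper offers here.

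Your approach --- relational composition of the submodules $\Phi$ and $\Psi$, diagonals as identities, associativity and unit laws inherited from the calculus of relations --- is exactly the construction underlying the cited naive category of spectral triples, so in spirit you are aligned with what the paper is invoking. Your observation that each morphism is itself an object of $\Sf$ is also the point of the ``bivariant'' remark following the proposition.

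Where you go beyond the paper is in your final paragraph, and you are right to flag those issues. The qualifier \emph{naive} in the paper (and in the cited works) is precisely an acknowledgment that closure, regularity of the restricted operator, density of domains, and adjointability of restricted commutators are not addressed; the construction is offered at the level of a formal category, with the analytic fine print either left open or handled by restricting to well-behaved (e.g.\ finitely generated projective, complemented) submodules --- which is also the hypothesis the paper imposes in the companion category $\Af^1$ via its footnote. Your suggestion to build complementedness and density explicitly into the definition of admissible morphism is a reasonable way to make the proposition honest, and is consistent with how the paper treats $\Af^1$; the paper itself simply does not take that step for $\Sf$.
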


The category $\Sf$ is essentially a bivariant version of the naive category of spectral triples~\cite{B1,B3,B6} and (at least in the commutative C*-algebra case) can be used to model the ``correspondence'' part in the definition of a propagator.

The ``transport'' process that in the commutative case is just a relatively unproblematic pull-back, in the case of non-commutative C*-algebras must be substituted by the more sophisticated notion of A.Connes' transfer of spectral triples  between different algebras via tensorization with appropriate bimodules (a process that has been further developed by B.Mesland).

Anyway, also the category $\Af^1$ is just an involutive version of the familiar category of Hilbert \hbox{C*-modules} over commutative unital C*-algebras used in Takahashi duality, where 1-arrows between C*-algebras reduce to unital \hbox{$*$-homo}morphisms. It is a general ideological principle that in non-commutative geometry categories of homomorphisms of algebras get substituted with categories of bimodules: every unital homomorphism $\phi:\As\to\Bs$ of unital \hbox{C*-alge}\-bras is associated to a pair of \emph{correspondences}: Hilbert C*-bimodules $\Bs_\As$ and $_\As\Bs$ (where the action of $\As$ on the right/left is via the homomorphism $\phi$) with $\Bs$-valued inner products. Composition of unital $*$-homomorphisms becomes the internal tensor product of such bimodules. As a consequence of this general passage from Abelian categories of bimodules to ``tensorial'' categories of bimodules, instead of pursuing the description of the details of dualities targeting the category $\Af^1$, it is important to try to look for a similar ``tensorial'' reformulation of the previous category.

A bivariant version of naive spectral triple is also needed and it is natural to start with a notion of Hilbert C*-bimodule. Although we are not ready yet to select a definition of Hilbert C*-bimodules over general non-commutative \hbox{C*-algebras}, we can provide some elementary examples of situations that are sufficient to cover at least some significant cases of Hilbert C*-bimodules over commutative C*-algebras. This will be enough to create an environment suitable for the formulation of dualities with subcategories of the previous categories of bundles that is more in line with generalizations to the non-commutative setting.
For this purpose, we define a \emph{unital C*-algebra bimodule, factorizable over commutative \hbox{C*-algebras}}, to be a unital bimodule ${}_\As\Rs_\Bs$ over the unital C*-algebras $\As$ and $\Bs$, such that $\Rs$ is a unital C*-algebra that is tensor product, over commutative unital \hbox{C*-algebras}, of other unital C*-algebra bimodules, i.e.~a unital C*-algebra of the form $\As\otimes_{C(Y)}\Fs\otimes_{C(X)}\Bs$, where 
$\As_{C(Y)},{}_{C(Y)}\Fs_{C(X)},{}_{C(X)}\Bs$ are three unital C*-algebra bimodules and $X,Y$ are compact Hausdorff spaces.\footnote{Note that, since the right/left actions of $\As$ and $\Bs$ on $\Rs=\As\otimes_{C(Y)}\Fs\otimes_{C(X)}\Bs$ commute, the C*-algebra $\Rs$ can be naturally considered as a bimodule over the unital \hbox{C*-algebras} $\As$ and $\Bs$, both on the right and on the left.}
A \emph{Hilbert C*-bimodule over a C*-algebra bimodule factorizable over commutative C*-algebras} is a unital bimodule ${}_\Rs\Ms_\Rs$ on a unital C*-algebra 
bimodule factorizable over commutative \hbox{C*-algebras} ${}_{\As}\Rs_\Bs=\As\otimes_{C(Y)}\Fs\otimes_{C(X)}\Bs$, 
that is of the form ${}_\Rs\Ms_\Rs=\As\otimes_{C(Y)}\otimes\widehat{\Ms}\otimes_{C(X)}\Bs$ where $\widehat{\Ms}$ is a bimodule over $\Fs$ that is also equipped with both right $\ip{\cdot}{\cdot}_\Fs$ and left ${}_\Fs\ip{\cdot}{\cdot}$ $\Fs$-valued inner products\footnote{Here both inner products are assumed to be Hermitian positive non-degenerate with the left product being 
left $\Fs$-linear: ${}_\Fs\ip{fx}{y}=f\cdot{}_\Fs\ip{x}{y}$ and right $\Fs$-adjointable: ${}_\Fs\ip{xf}{y}={}_\Fs\ip{x}{yf^*}$; and the  right product being right $\Fs$-linear: $\ip{x}{yf}_\Fs=\ip{x}{y}_\Fs \cdot f$ and left $\Fs$-adjointable: $\ip{fx}{y}_\Fs=\ip{x}{f^*y}_\Fs$, 
$x,y\in\widehat{\Ms}$, $f\in \Fs$.
} that satisfy the compatibility condition \hbox{${}_\Fs\ip{x}{y}x=x\ip{y}{x}_\Fs$}, for all $x,y\in\Ms$.\footnote{The compatibility condition assures that the left and right norms induced by the inner products coincide and for bimodule morphisms that are left and right adjointable the left and right adjoints coincide.}

\begin{theorem}
There is an involutive category $\Af^2$ of Hilbert C*-bimodules over unital bimodule C*-algebras factorizable over commutative C*-algebras.
\end{theorem}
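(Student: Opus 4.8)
The plan is to present $\Af^2$ as an involutive category whose objects are unital C*-algebras and whose arrows from $\As$ to $\Bs$ are unitary-isomorphism classes of Hilbert C*-bimodules ${}_\Rs\Ms_\Rs = \As\otimes_{C(Y)}\widehat\Ms\otimes_{C(X)}\Bs$ over factorizable C*-algebra bimodules ${}_\As\Rs_\Bs = \As\otimes_{C(Y)}\Fs\otimes_{C(X)}\Bs$, where $\widehat\Ms$ is an $\Fs$-bimodule carrying the two compatible $\Fs$-valued inner products of the definition. First I would fix all conventions: every tensor product over a commutative subalgebra $C(X)$ or $C(Y)$ is the balanced (fibred) C*-tensor product, i.e. the appropriate C*-completion of the algebraic relative tensor product modulo the balancing relations; arrows are taken up to unitary bimodule isomorphism so that all the structural isomorphisms below become equalities of arrows; and the inner products are those produced by the standard interior tensor product construction for Hilbert C*-(bi)modules.

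Next I would define composition and units. Given ${}_\Rs\Ms_\Rs:\As\to\Bs$ with $\Rs=\As\otimes_{C(Y)}\Fs\otimes_{C(X)}\Bs$ and ${}_{\Rs'}\Ms'_{\Rs'}:\Bs\to\Cs$ with $\Rs'=\Bs\otimes_{C(X')}\Fs'\otimes_{C(X'')}\Cs$, set
\[
\Rs''=\Rs\otimes_\Bs\Rs',\qquad \Ms''=\Ms\otimes_\Bs\Ms'.
\]
One immediately reads off $\Rs''=\As\otimes_{C(Y)}\Fs''\otimes_{C(X'')}\Cs$ with $\Fs'':=\Fs\otimes_{C(X)}\Bs\otimes_{C(X')}\Fs'$ a unital C*-algebra, bimodule over the commutative subalgebras $C(Y)$ and $C(X'')$ — so the composite is again factorizable — and $\Ms''=\As\otimes_{C(Y)}\widehat\Ms''\otimes_{C(X'')}\Cs$ with $\widehat\Ms''=\widehat\Ms\otimes_{C(X)}\Bs\otimes_{C(X')}\widehat\Ms'$. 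I would equip $\widehat\Ms''$ with the left and right $\Fs''$-valued inner products given by the interior tensor product rule, taking on the middle copy of $\Bs$ its canonical inner products $a^*b$ and $ab^*$; the compatibility identity ${}_{\Fs''}\ip{\cdot}{\cdot}\,x=x\,\ip{\cdot}{\cdot}_{\Fs''}$ for the composite then drops out of the corresponding identities for $\widehat\Ms$, for $\widehat\Ms'$ and for $\Bs$ over itself, exactly as in the proof that the interior tensor product of imprimitivity bimodules is again an imprimitivity bimodule. The identity on $\As$ is the bimodule ${}_\As\As_\As$ (factorizable as $\As\otimes_\As\As\otimes_\As\As$ when $\As$ is commutative, and adjoined by hand — or the object class correspondingly restricted — in general); the unit laws $\As\otimes_\As\Ms\cong\Ms\cong\Ms\otimes_\As\As$ and associativity are then inherited from the associativity and unitality of the balanced tensor product, up to the canonical unitaries.

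For the involution I would send ${}_\Rs\Ms_\Rs:\As\to\Bs$ to the conjugate bimodule ${}_{\bar\Rs}\bar\Ms_{\bar\Rs}:\Bs\to\As$, where $\bar\Rs=\Bs\otimes_{C(X)}\bar\Fs\otimes_{C(Y)}\As$ is again factorizable and $\overline{\widehat\Ms}$ is $\widehat\Ms$ with its left and right $\Fs$-valued inner products interchanged and conjugated; this operation is the identity on objects, reverses arrows, satisfies $\dagger\circ\dagger=\mathrm{id}$ since $\overline{\overline{(\cdot)}}\cong\mathrm{id}$, is contravariantly functorial since $\overline{\Ms\otimes_\Bs\Ms'}\cong\bar\Ms'\otimes_\Bs\bar\Ms$, and preserves units since $\bar\As\cong\As$. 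The step I expect to be the main obstacle is establishing that all the C*-level tensor products are well defined: one has to check that the relative algebraic tensor products over the commutative subalgebras $C(X),C(Y)$ (and the balanced tensor product over $\Bs$ used in composition) really do complete to unital C*-algebras, and that the semi-inner products induced on $\widehat\Ms''$ are positive and non-degenerate, so that $\Ms''$ is a genuine Hilbert C*-bimodule of the required factorizable shape — routine when the commutative subalgebras sit centrally in the multiplier algebras, but needing to be pinned down carefully — together with the mildly unsatisfactory circumstance that the identity arrows do not lie in the strict factorizable form and must be incorporated separately. Both points are precisely the sort of thing one expects in a deliberately naive framework, and a cleaner account would first require fixing the correct axioms for Hilbert C*-bimodules over noncommutative C*-algebras, a task the authors explicitly defer.
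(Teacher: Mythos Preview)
Your proposal is correct and follows essentially the same route as the paper: objects are unital C*-algebras, morphisms are the factorizable Hilbert C*-bimodules, composition is the internal tensor product over $\Bs$ with the new middle algebra $\Fs\otimes_{C(X)}\Bs\otimes_{C(X')}\Fs'$ and inner products exactly of the form you describe (the paper writes them out explicitly as ${}_\Fs\ip{x_1}{x_2}\otimes(b_1b_2^*)\otimes{}_\Gs\ip{y_1}{y_2}$ and $\ip{x_1}{x_2}_\Fs\otimes(b_1^*b_2)\otimes\ip{y_1}{y_2}_\Gs$), and the involution is the passage to the contragredient/conjugate bimodule. Your treatment is in fact more careful than the paper's own proof, which does not discuss identities, associativity, or the passage to isomorphism classes at all; your observation that the identity arrows sit awkwardly in the strict factorizable form is a genuine point the paper simply does not raise.
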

\begin{proof}
Objects are unital C*-algebras $\As,\Bs,\Cs,\dots$;
morphisms from $\Bs$ to $\As$ are given by Hilbert \hbox{C*-bimodules} $\Ms$ over unital C*-algebra bimodules factorizable over commutative C*-algebras 
such as $\Rs:=\As\otimes_{C(Y)}\Fs\otimes_{C(X)} \Bs$, $\Ss:=\Bs\otimes_{C(Z)} \Gs\otimes_{C(W)}\Cs$. 

The involution is given by the passage to the contragredient bimodules $\Ms^*$ over $\Bs\otimes_{C(X)}\Fs\otimes_{C(Y)}\As$. 

The composition of $\Ms_\Rs=\As\otimes_{C(Y)}\widehat{\Ms}\otimes_{C(Y)}\Bs$ with 
$\Ns_\Ss=\Bs\otimes_{C(Z)}\otimes\widehat{\Ns}\otimes_{C(W)}\Cs$ is given by the internal tensor product of bimodules 
$\Ms\otimes_\Bs \Ns=\As\otimes_{C(Y)}(\widehat{\Ms}\otimes_{C(X)}\Bs\otimes_{C(Z)}\otimes\widehat{\Ns})\otimes_{C(W)}\Cs$ as a bimodule over $\Rs\otimes_\Bs\Ss\simeq \As\otimes_{C(X)} (\Fs\otimes_{C(Y)}\Bs\otimes_{C(Z)}\Gs)\otimes_{C(W)}\Cs$ with compatible 
$(\Fs\otimes_{C(Y)}\Bs\otimes_{C(Z)}\Gs)$-valued inner products on 
$\widehat{\Ms}\otimes_{C(X)}\Bs\otimes_{C(Z)}\otimes\widehat{\Ns}$ defined by universal factorization property via 
\begin{gather*}
{}_\bullet\ip{x_1\otimes_{C(X)} b_1\otimes_{C(Z)} y_1}{x_2\otimes_{C(X)} b_2\otimes_{C(Z)} y_2}:=
{}_\Fs\ip{x_1}{x_2}\otimes_{C(X)}(b_1b_2^*)\otimes_{C(Z)}{}_\Gs\ip{y_1}{y_2}
\\
\ip{x_1\otimes_{C(X)} b_1\otimes_{C(Z)} y_1}{x_2\otimes_{C(X)} b_2\otimes_{C(Z)} y_2}_\bullet:=
\ip{x_1}{x_2}_\Fs\otimes_{C(X)}(b_1^*b_2)\otimes_{C(Z)}\ip{y_1}{y_2}_\Gs
\end{gather*}
\end{proof}

The previous category can be made into a 2-category $\Af^2$ if we define \hbox{2-arrows} as pairs $(\phi,\Phi)$ such that $\Phi:\Ms_\Rs\to\Ns_\Ss$ is additive map and \hbox{$\phi:\Fs\to\Gs$} is a unital $*$-homomorphism that satisfies $\Phi(r_1xr_2)=\phi(r_1)\Phi(x)\phi(r_2)$, where with some abuse of notation we also denote $1_\As\otimes\phi \otimes 1_\Bs:\Rs\to\Ss$ by $\phi$. Furthermore (at least in the commutative C*-algebras case), one can consider as 2-arrows with source $\Ms_\Rs$ and target $\Ns_\Ss$ new Hilbert C*-bimodules over factorizable \hbox{C*-algebras} bimodules from $\Rs$ to $\Ss$ and in this way the category now constructed becomes actually an $\infty$-category, defining recursively level-$(n+1)$ morphisms as morphisms between the spectral module triples that are morphism at level-$n$.

\section{Section Functor}

\begin{theorem}
There is a section functor $\Gamma:\Ef\to\Af$ that to every propagator $(E,\gamma,R)$ of Hermitian bundles from $(E^1,\pi^1,X^1)$ to $(E^2,\pi^2,X^2)$ associates the Hilbert C*-bimodule $\Gamma(R,E)$ over the \hbox{C*-al}\-gebra bimodule factorizable over commutative C*-algebras 
$C(R)\simeq C(X^1)\otimes_{C(X^1)}C(R)\otimes_{C(X^2)} C(X^2)$.
\end{theorem}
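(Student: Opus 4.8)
The plan is to build the section functor $\Gamma:\Ef\to\Af$ by combining the classical section functors of Takahashi duality (from Hilbert bundles over compact Hausdorff spaces to Hilbert C*-modules over commutative unital C*-algebras) with the algebraic manipulations already described in the remarks following Proposition~3.1: passage to sections turns Whitney sums into direct sums of modules, and turns pull-backs along the legs of a relational span into extensions of scalars (tensor products over the corresponding commutative C*-algebras). Concretely, I would first fix a propagator $(E,\gamma,R)$ from $(E^1,\pi^1,X^1)$ to $(E^2,\pi^2,X^2)$, recall that $R$ carries the relational span $X^1\xleftarrow{\rho_1}R\xrightarrow{\rho_2}X^2$, and hence that $C(R)$ is canonically a $C(X^1)$-$C(X^2)$-bimodule via $\rho_1^*$ and $\rho_2^*$; this yields the factorization $C(R)\simeq C(X^1)\otimes_{C(X^1)}C(R)\otimes_{C(X^2)}C(X^2)$ exhibiting $C(R)$ as a unital C*-algebra bimodule factorizable over the commutative C*-algebras $C(X^1)$ and $C(X^2)$, with $\Fs:=C(R)$ playing the role of the middle algebra.

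**On objects**, I would set $\Gamma(X^i,E^i):=\Gamma(E^i)$, the Hilbert $C(X^i)$-module of continuous sections, as in Takahashi duality. On a propagator, I would define $\widehat{\Ms}:=\Gamma(R,E)$ to be the module of continuous sections of the sub-bundle $E\subset\rho_1^\bullet(E^1)\oplus\rho_2^\bullet(E^2)$ over $R$; the key identifications are $\Gamma(\rho_i^\bullet(E^i))\simeq C(R)\otimes_{C(X^i)}\Gamma(E^i)$ (compatibility of sections with pull-back, using local triviality and a partition of unity on the compact space $R$) and $\Gamma(\rho_1^\bullet(E^1)\oplus\rho_2^\bullet(E^2))\simeq\Gamma(\rho_1^\bullet(E^1))\oplus\Gamma(\rho_2^\bullet(E^2))$ (sections of a Whitney sum). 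Under these, $\Gamma(R,E)$ becomes a submodule of $(C(R)\otimes_{C(X^1)}\Gamma(E^1))\oplus(C(R)\otimes_{C(X^2)}\Gamma(E^2))$, which is exactly the shape required for a morphism of the category $\Af^2$ (Theorem~3.5) — indeed, since $E_r$ is fiberwise the graph of a partial isometry, $\Gamma(R,E)$ is the graph of an isometric morphism of Hilbert $C(R)$-modules, matching Proposition~3.3. The two $C(R)$-valued inner products come from the Hermitian structures on the two summands (fiberwise integration is not needed here: the C*-module inner product over $C(R)$ is the pointwise Hermitian form $r\mapsto\langle\sigma_r,\tau_r\rangle_{E_r}$), and the compatibility condition ${}_{C(R)}\langle x,y\rangle\, x = x\,\langle y,x\rangle_{C(R)}$ holds because $C(R)$ is commutative, so the left and right forms coincide.

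**On morphisms** (composition), I would verify functoriality by checking that $\Gamma$ sends the fiber-product composition of propagators — pull the two spans back over the common manifold $X^2$, form the fiberwise composite of the two graphs of partial isometries — to the internal tensor product of the corresponding Hilbert C*-bimodules, using again that $\Gamma$ commutes with pull-back and direct sums together with the universal factorization property that defines the composed inner products in the proof of Theorem~3.5. Compatibility with the involutions (reversal of the span, contragredient bimodule) and preservation of identities (the diagonal span with the identity-graph bundle goes to the identity bimodule $C(X^i)$ over itself) are then routine. If the bundles carry Clifford actions and compatible connections, I would additionally record that the module of sections inherits a Clifford action of $\Gamma(\CCl(X^i))$ and that the Levi–Civita/Ehresmann connection passes to a Hermitian connection on $\Gamma(R,E)$, which is the extra data needed to land in the refined (spectral) subcategories of $\Af$.

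**The main obstacle** I anticipate is not the object-level Serre–Swan–Takahashi bookkeeping but the compatibility of $\Gamma$ with composition when the underlying manifolds are replaced by diffeological spaces and relational spans (as forced by the footnote about equalizers): one must ensure that the fiber product $R_1\times_{X^2}R_2$ used in composing propagators again carries enough structure for $\Gamma$ to be defined and that $C(R_1\times_{X^2}R_2)\simeq C(R_1)\otimes_{C(X^2)}C(R_2)$ holds at the level of the relevant function algebras — i.e.\ that the tensor product of the commutative C*-algebras computes the algebra of functions on the (diffeological) fiber product. This requires a careful choice of the commutative C*-algebra attached to a diffeological space and a density/completion argument; it is exactly the point where the present ``naive'' account stops short of a full proof, and where the promised reconstruction/spectrum functor in subsequent work is needed to close the loop.
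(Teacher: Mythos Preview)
Your proposal is correct and follows essentially the same approach as the paper: the paper's own proof is a one-line observation that $\Gamma(R,E)$, the module of continuous sections of the Hilbert bundle $(E,\gamma,R)$, is already a Hilbert C*-bimodule over the commutative unital C*-algebra $C(R)\simeq C(X^1)\otimes_{C(X^1)}C(R)\otimes_{C(X^2)}C(X^2)$, which is trivially factorizable over $C(X^1)$ and $C(X^2)$. You have in fact supplied considerably more detail than the paper does --- the identifications of pull-backs with extension of scalars, Whitney sums with direct sums, the verification of functoriality under composition and involution, and the diffeological obstacle --- none of which appear in the paper's proof, which stops at the assignment on a single propagator and does not address functoriality at all.
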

\begin{proof}
The set $\Gamma(R,E)$ of continuous sections of the Hilbert bundle $(E,\gamma,R)$ is already a Hilbert \hbox{C*-bimodule} over the commutative unital 
C*-algebra $C(R)\simeq C(X^2)\otimes_{C(X^2)}C(R)\otimes_{C(X^1)}C(X^1)$ that is a C*-algebra bimodule, factorizable over the commutative C*-algebras $C(X^1)$ and $C(X^2)$. 
\end{proof}

More generally, one can consider propagators where $(E,\gamma,R)$ is a bundle of Hilbert C*-bimodules over a bundle $(A,\gamma',R)$ of commutative C*-algebras (this means that there is a fiber preserving action of the total space $A$ on the total space $E$ making each fiber $E_r$ into a C*-bimodule over the C*algebra $A_r$, for all $r\in R$) and in this way one recovers, via the section functor, a \hbox{C*-bimodule} over the commutative C*-algebra bimodule factorizable over commutative \hbox{C*-algebras} given by 
\hbox{$C(X^1)\otimes_{C(X^1)} \Gamma(R,A)\otimes_{C(X^2)} C(X^2)$}.

\medskip

Let us examine in some more detail how (totally geodesic) maps between compact Riemannian manifolds are described using spectral module triples (this will provide insight on the role of tensorization by B.Mesland bimodules). As already described at the end of the previous section (in the specific case of totally geodesic Riemannian embeddings), every totally geodesic Riemannian map $f:M\to N$ induces a propagator between the complexified Grassmann bundles that is stable under Clifford action and the induced direct sum of the Levi-Civita connections. Perfectly similar results can be formulated for general totally geodesic propagators between Hermitian bundles of Clifford modules with a compatible connection.

Modulo pull-back of bundles and change of rings of modules (that in this commutative situation is not problematic), an application of the section functor
$\Gamma$ will immediately produce a propagator of Hilbert C*-modules over the same C*-algebra $C(f)\simeq C(M)$ and in the totally geodesic case a naive morphism of spectral module triples in $\Sf$.

Alternatively one notes that a propagator between bundles or modules (let's say over the same space) induces at the second quantized level an inclusion into a tensor product factorization.
To explain, in a very special situaton, the tangent bundle decomposition $T(N)|_{f(M)}=Df(T(M))\oplus Df(T(M))^\perp$ corresponds to a  
factorization $\Lambda^\CC(T(N)|_{f(M)})\simeq \Lambda^\CC(Df(T(M)))\otimes\Lambda^\CC(Df(T(M))^\perp)$ of 
Grassmann bundles  and so to a tensorial factorization of the bimodules of sections. In this way we see a possible role for $\Gamma(\Lambda^\CC(Df(T(M))^\perp))$ as a Mesland bimodule for the Hodge-De Rham spectral triples of $M$ and $N$. We plan to elaborate much further on these points in forthcoming work.

\section{Outlook}

The work here presented is at a very preliminary stage and most of the elementary categorical structures here considered are essentially a playground (still mainly at the topological level) to test the validity of some conjectures. Specifically we would like to see a clear picture of how geometrical morphisms of Riemannian manifolds can be encoded via the section functor in terms of B.Mesland's bimodules between commutative Hodge-De Rham spectral triples. In order to provide a duality, a spectrum functor from categories of commutative Riemannian spectral triples to Riemannian manifolds must be constructed. At the level of objects this is already done, via the already mentioned reconstruction theorems by A.Connes and A.Rennie, S.Lord, J.Varilly, and our next goal is to prove a similar reconstruction theorem for suitable (totally geodesic) morphisms between these Hodge-De Rham spectral triples. Our hope is that, if morphisms can be described as a bivariant version of spectral triples, a direct application of (part of) the reconstruction theorems for objects might be possible also in the case of morphisms.

Another important direction of investigation is related to our belief that ``involutive tensorial'' categories are the right environment for
the study of non-commutative geometry and that involutive categories of bimodules should help to formulate a version of B.Mesland category of ``bivariant'' spectral triples with involutions. The categories of Hilbert \hbox{C*-bimodules} over \hbox{C*-algebra} bimodules factorizable over commutative C*-algebras that we defined here are not yet sufficient to cover even some of the most elementary morphisms of non-commutative spaces (the bimodule $\Bs_\As$ induced by a unital $*$-homomorphism $\phi:\As\to\Bs$, for example).\footnote{
A more satisfactory treatment of morphisms of non-commutative spaces (even at the topological level) is well-beyond the scope of such elementary paper and will likely require the usage of higher-C*-categories.  
}


\newpage 

\emph{Notes and Acknowledgments:} 
The first author thanks his long time collaborator R.Conti at the ``Sapienza'' University in Rome for the discussion of many topics related to this research. He also thanks Starbucks Coffee at the $1^{\text{st}}$ floor of Emporium Tower in Sukhumvit, where he spent most of the time dedicated to this research project. 

\smallskip 

\textit{We stress that the two authors do not share any of their ideological, religious, political affiliations}. 

{\small

}

\end{document}